\documentclass{amsart}

\usepackage[utf8]{inputenc}
\usepackage{amsthm}
\usepackage{amsmath}
\usepackage{amssymb}
\usepackage{pgfplots}
\usepackage{mathrsfs}
\usepackage{tikz-cd}
\usepackage[cal=boondox]{mathalfa}
\usepackage{mathtools}
\usepackage{xcolor}
\usepackage{enumitem}
\usepackage{caption}
\usepackage{float}
\usepackage{natbib}
\usepackage{eucal}
\usepackage{lipsum}
\usepackage{pdflscape}
\usepackage{cancel}
\usepackage{amsfonts}
\usepackage{url}
\usepackage{amssymb}
\usepackage{enumitem}
\theoremstyle{plain}
\newtheorem{theorem}{Theorem}[section]
\theoremstyle{definition}
\newtheorem{definition}[theorem]{Definition}

\newtheorem{corollary}[theorem]{Corollary}
\newtheorem{lemma}[theorem]{Lemma}

\newtheorem{remark}{Remark}

\usepackage{multicol}
\usepackage{tikz}
\usetikzlibrary{positioning}
\usepackage[]{hyperref}
\tikzset{sgplattice/.style={inner sep=1pt,norm/.style={black!50!black},char/.style={black!50!black},
  lin/.style={black!50}},cnj/.style={black!50,yshift=-2.5pt,left=-1pt of #1,scale=0.5,fill=white}}
\newcommand{\hsep}{\hspace{3pt}}

\newcommand{\vsep}{

\vspace{7pt}

}

\newcommand{\defeq}{\coloneqq}

\usepackage{orcidlink}
\usepackage{lipsum}
\usepackage{subfiles} 

\title{Galois subspaces for compact Riemann surfaces of genus 2}
\author{Juan-Pablo Llerena-C\'ordova \orcidlink{0009-0005-0232-5639}}
\thanks{\noindent Department of Mathematics and Statistics, University of Ottawa\\
150 Louis-Pasteur Pvt, Ottawa, ON, Canada K1N 6N5\\
jller032@uottawa.ca}
\keywords{Galois subspaces, Galois embeddings, compact Riemann surfaces, automorphism groups}
\subjclass{14H55, 14H37, 14C20}
\begin{document}

\maketitle

\begin{abstract}
	Let $X$ be a compact Riemann surface of genus 2 and $D$ a very ample divisor with $\phi_D$ its associated embedding into $\mathbb{P}^{n}$. We consider the set $G_{X,D}$ of linear subspaces $W$ of $\mathbb{P}^n$ of codimension $2$ with projection $\pi_W$ such that $f_W = \pi_W \circ \phi_D$ is Galois, i.e. $f_W^*k(\mathbb{P}^1) \subseteq k(X)$ is a Galois extension. It is known that $G_{X,D}$ is isomorphic to a disjoint union of projective spaces. In this article, we calculate the dimension of projective spaces in the decomposition of $G_{X,D}$, when $D$ is induced by a subgroup of $\mathrm{Aut}(X)$.
\end{abstract} 

\section*{Introduction}

Let $X$ be a compact Riemann surface of genus $g$, $k(X)$ the function field of $X$, $D$ a very ample divisor on $X$, and $\phi_D:X \rightarrow \mathbb{P}^N$ the embedding induced by $D$. If $W\in \mathbb{G}(N-2, N)$, we denote by $\pi_W:\mathbb{P}^N \rightarrow \mathbb{P}^1$ the projection away from $W$.
\begin{definition}
	We say that $\phi_D$ is a \textit{Galois embedding} if there exists a subspace $W\in \mathbb{G}(N - 2, N)$, such that $f_W = \pi_W \circ \phi_D$ is a Galois cover; that is, the field extension $f_W^*k(\mathbb{P}^1)\subseteq k(X)$ is Galois. In this case, we say that $W$ is a \textit{Galois subspace} for $D$. Moreover, if $W$ is disjoint from $\phi_D(X)$, then we say that $W$ is a \textit{disjoint Galois subspace}.
\end{definition}
The general concept of Galois embeddings and Galois subspaces was introduced by Yoshihara in \cite{First_Yoshihara}. The particular case when $W$ is a point was introduced by Fukasawa in \cite{MR2290435}. In recent years, multiple authors have expanded on the idea of Galois subspaces, \textit{e.g.}, \cite{MR4283171}, \cite{Random_2}, \cite{Random_3}, \cite{MR3844636}, and \cite{Auffarth_2023}. 
 We build on the work of Auffarth and Rahausen \cite{MR4283171}, in which the authors study the following set
\[
	G_{X,D} \defeq \{W\in \mathbb{G}(N - 2,  N); \hsep \pi_W \circ \phi_D \text{ is Galois}\}.
\]
They prove (see \cite[Theorem 1.1]{MR4283171}) that if $g \geq 2$, then
\begin{equation}\label{decomposition}
	G_{X,D} \cong \bigsqcup_{\substack{H \leq \text{Aut}(X)\\ X/H \cong \mathbb{P}^1}} \mathcal{X}_{D, \pi_H},
\end{equation}
where $\pi_H$ denotes the canonical projection $X \rightarrow X/H$, and $\mathcal{X}_{D, \pi_H}$ is isomorphic to a projective space of dimension $\ell(D - D_{\pi_H}) - 1$, where $\ell(D - D_{\pi_H}) \defeq \text{dim}H^0(X,\mathcal{O}_X(D - D_{\pi_H}))$. In particular, the disjoint Galois subspaces are in the $0$-dimensional components of the decomposition of Equation \eqref{decomposition} (see \cite[Remark 3.1]{MR4283171}).
\vsep
The set $G_{X,D}$ has been studied when $X$ has genus $0$ or $1$ in \cite{MR4283171}, and \cite{MR4198195}, respectively. In this article, we will study the set $G_{X,D}$ when $g = 2$, using the decomposition of Equation \eqref{decomposition}. Because $\mathcal{X}_{D, \pi_H}$ is a projective space of dimension $\ell(D - D_{\pi_H}) - 1$ and $\text{Aut}(X)$ is finite, to study the components of the decomposition of $G_{X,D}$, we only need to focus on calculating the dimension of each projective space. This is possible because the isomorphism class of $\text{Aut}(X)$ is known for genus $2$ (see \cite{zbMATH01251322} and \cite{MR1090743}). 
\vsep
We mainly focus on disjoint Galois subspaces, which lie in the $0$-dimensional components of the decomposition of Equation \eqref{decomposition}. Moreover, $G_{X,D}$ has a $0$-dimensional component if and only if there exists a subgroup $H\leq \text{Aut}(X)$ with $X/H \cong \mathbb{P}^1$ such that $D \sim D_{H}$, where $D_H\defeq D_{H,p} \defeq \sum_{h\in H} h[p]$ for some $p\in X$ (see Lemma \ref{DisjointGalois}). Note that, because $X/H \cong \mathbb{P}^1$, the definition of $D_{H}$ is independent of the choice of $p$, up to linear equivalence (see Lemma \ref{lemma_independence}).
\vsep
The main result of this article describes the components of the decomposition of $G_{X,D}$ in Equation \eqref{decomposition} when $D$ is induced by a subgroup of $\text{Aut}(X)$, to guarantee the existence of a $0$-dimensional component.
\begin{theorem}\label{Main_theorem}
	Let $X$ be a compact Riemann surface of genus $2$ and let $H \leq G = \text{Aut}(X)$. The following assertions hold:
	\begin{enumerate}
		\item\label{Main_theorem_1} Assume $G\not\cong C_{10}$. Two very ample divisors induced by subgroups of $G$ differ by a multiple of the canonical divisor.
		\item In $G_{X,D_H}$, there is a one-to-one correspondence between disjoint Galois subspaces and subgroups of order $|H|$.
	\end{enumerate}
\end{theorem}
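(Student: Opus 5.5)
The plan is to work directly with the divisors $D_H$, their degrees, and the Riemann--Roch formula on a genus $2$ curve. For $H\le G$ with $X/H\cong\mathbb{P}^1$, the divisor $D_H$ is a fibre of $\pi_H$, so $D_H\sim \pi_H^*(\text{pt})$ and $\deg D_H=|H|$. In genus $2$, a divisor of degree $d\ge 5$ is very ample, a divisor of degree $4$ is very ample iff it is not of the form $K+p+q$, and degree $\le 3$ is never very ample. So only subgroups with $|H|\ge 4$ matter; the relevant $X/H\cong\mathbb{P}^1$ condition can be read off from Riemann--Hurwitz. The hyperelliptic involution $\iota$ is central in $G$, and $D_{\langle\iota\rangle}\sim K$. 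For any $H\ni\iota$, the cover $\pi_H$ factors as $X\to X/\langle\iota\rangle\cong\mathbb{P}^1\to X/H$. Hence $D_H\sim \tfrac{|H|}{2}K$, because a fibre of $\mathbb{P}^1\to\mathbb{P}^1$ of degree $m$ pulls back to $m$ fibres of the canonical map.

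For part (1), the key step is a reduction lemma: every subgroup $H$ with $X/H\cong\mathbb{P}^1$ and $D_H$ very ample either contains $\iota$, or $H\langle\iota\rangle\cong H\times C_2$ and $D_{H\langle\iota\rangle}\sim 2D_H$. The second alternative follows by pushing forward along $X/H\to X/H\langle\iota\rangle$, a degree $2$ map of $\mathbb{P}^1$'s. In the first case $D_H$ is a multiple of $K$, and the claim follows. In the second case, I would show that $D_H$ itself is $\tfrac{|H|}{2}K$ when $|H|$ is even, or use $2D_H\sim |H|K$ to control it when $|H|$ is odd. This yields the main obstacle: subgroups not containing $\iota$, of odd order or of even order acting on $\mathbb{P}^1=X/\iota$ with a genuine lift. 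I would go through the classification of $\mathrm{Aut}(X)$ in genus $2$, which is $C_2$, $V_4$, $D_8$, $D_{12}$, $C_{10}$, $C_2\times C_2\times S_3$-type groups, $\mathrm{GL}_2(3)$, and so on, and list the subgroups $H\not\ni\iota$ with quotient $\mathbb{P}^1$ and $|H|\ge 4$. For each one, I would check that $D_H - \tfrac{|H|}{2}K$ is trivial, for instance by choosing $p$ to be a Weierstrass point fixed by an element of $H$ so that $D_{H,p}$ becomes an explicit sum of Weierstrass points. The exception $C_{10}$ arises because its $C_5$ has quotient $\mathbb{P}^1$ with $\deg D_{C_5}=5$ odd, so $D_{C_5}$ cannot be a multiple of $K$; there we get $5$ versus $10$ and the claim fails. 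I expect the other odd-order cases, such as $C_3$ in $D_{12}$ or $\mathrm{GL}_2(3)$, to fail the quotient-$\mathbb{P}^1$ condition or to force $|H|$ even, and verifying this via Riemann--Hurwitz is the tedious part.

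For part (2), I would use Lemma \ref{DisjointGalois}: disjoint Galois subspaces in $G_{X,D_H}$ correspond to the $0$-dimensional components $\mathcal{X}_{D_H,\pi_{H'}}$, that is, to subgroups $H'$ with $X/H'\cong\mathbb{P}^1$ and $D_H\sim D_{H'}$. Necessity of $|H'|=|H|$ follows from degrees. For sufficiency, given $|H'|=|H|$, part (1) (or its proof in the $C_{10}$ case) gives $D_{H'}\sim D_H$, since both equal $\tfrac{|H|}{2}K$, or both are $D_{C_5}$, the unique subgroup of order $5$. It then remains to check that every subgroup of order $|H|$ has quotient $\mathbb{P}^1$. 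This follows since $X/H'$ has genus $\le 1$, and genus $1$ forces $|H'|\le 2$, or $|H'|$ small with $D_{H'}$ not very ample. Uniqueness of the point in each component comes from $\ell(D_H-D_{H'})=\ell(0)=1$.
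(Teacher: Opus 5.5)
Your treatment of subgroups containing the hyperelliptic involution $\iota$ is correct. The factorization $X\to X/\langle\iota\rangle\to X/H$ gives $D_H\sim\tfrac{|H|}{2}K$ uniformly; this is the paper's coset-splitting argument with $L=\langle\iota\rangle$. Your part (2) follows the paper: Lemma~\ref{DisjointGalois}, a degree count, and part (1).

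\textbf{The gap} is the case $\iota\notin H$ in part (1). Your reduction only gives $2D_H\sim|H|K$, so $D_H-\tfrac{|H|}{2}K$ is shown to be a $2$-torsion class in $\mathrm{Pic}^0(X)$. A $2$-torsion class need not be trivial. Removing this ambiguity is the real content of (1), and you defer it to a case check that is never done. Part (2) inherits the gap, since it rests on (1).

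The tool you suggest does not apply as stated. Take an $S_3\le D_6$ acting on $y^2=x^6+ax^3+1$ ($a\neq\pm2$) through $x\mapsto\omega x$ and $x\mapsto 1/x$. Its nontrivial elements fix only points over $x\in\{0,\infty\}$ or over $x^6=1$. None of these is a branch point, so no Weierstrass point is fixed by an element of $H$.

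\textbf{The missing ingredient} is the mechanism of Lemma~\ref{secondlemma}: if $L\le H$ and $X/L\cong\mathbb{P}^1$, grouping the $H$-orbit into $L$-cosets gives $D_H\sim[H:L]\,D_L$.

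Contrary to your expectation, $X/C_3\cong\mathbb{P}^1$. By Riemann--Hurwitz, a genus-$1$ quotient forces total ramification $2$, hence $|H'|\le 4$. For $|H'|=3,4$ this would require exactly one branch point, which is impossible for an abelian cover of an elliptic curve. So among nontrivial subgroups only the elliptic involutions have non-rational quotient.

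For $G\not\cong C_{10}$, every very ample $D_H$ with $\iota\notin H$ comes from a subgroup of order $6$. These are the $S_3$'s, and in $C_3\rtimes D_4$ also the $C_6$'s avoiding $\iota$, such as the one generated by $(x,y)\mapsto(\zeta_6x,y)$ on $y^2=x^6-1$. Each contains a $C_3$, hence $D_H\sim 2D_{C_3}\sim D_{C_3\times\langle\iota\rangle}\sim 3K$, which closes the gap.

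Two smaller inaccuracies:
\begin{enumerate}
\item The order-$24$ group is $C_3\rtimes D_4$, whose center is just $\langle\iota\rangle$, not a group of type $C_2\times C_2\times S_3$. This would change the subgroup list in any case check.
\item No degree-$4$ divisor is very ample in genus $2$. Every degree-$2$ class is effective, so every degree-$4$ divisor is $K+p+q$, and the threshold is $|H|\ge 5$, not $|H|\ge 4$.
\end{enumerate}
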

The structure of the article is as follows:
\vsep
In Section 1, we prove some general results on compact Riemann surfaces of genus $g \geq 2$. We prove Theorem \ref{Main_theorem} in Section 2. Lastly, in Section $3$, we present a table describing the dimension of the components of $G_{X,D_H}$.
\begin{remark}
	Even though the techniques used to prove Theorem \ref{Main_theorem} hold for arbitrary genus, that is, Lemma \ref{First_lemma} and Lemma \ref{secondlemma}, we are unable to prove an analog of Theorem \ref{Main_theorem} for genus greater than $2$. The main reason is that for a Riemann surface $X$ of genus $2$, there exist many subgroups $H\leq \mathrm{Aut}(X)$ such that $X/H \cong \mathbb{P}^1$. This is not the case when the genus is greater than $2$. Although the structure of $\mathrm{Aut}(X)$ is well-known for low genus, it is not clear whether the techniques used in the proof of Theorem \ref{Main_theorem} can be used to give a similar result for higher genus.
\end{remark}
Finally, we note that Yoshihara and Fukasawa have a list of current open problems of Galois subspaces \cite{List_open_problems}, in which they also summarize what is currently known regarding each problem.
\section{Linear equivalence of divisors given by a subgroup}
In this section, we will prove the lemmas necessary to prove Theorem \ref{Main_theorem}. For this section, unless stated otherwise, $X$ will be a compact Riemann surface of genus $g\geq 2$, and $G = \text{Aut}(X)$.
\vsep
Recall that given a subgroup $H \leq G$ with $X/H \cong \mathbb P^1$, we denote by $D_H \defeq D_{H,p} \defeq \sum_{h\in H} h[p]$ the divisor induced by $H$, for some $p \in X$. This is independent of the choice of $p$, up to linear equivalence, by the following lemma.
\begin{lemma}\label{lemma_independence}
	Let $H \leq G$ be such that $X/H \cong \mathbb P^1$. Then, for any $p,q\in X$ we have that $D_{H,p} \sim D_{H,q}$. That is, $D_{H,p}$ is independent of the choice of the point $p$, up to linear equivalence.
\end{lemma}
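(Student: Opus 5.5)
The plan is to realize $D_{H,p}$ as the pullback of a point divisor under the quotient map $\pi_H\colon X\to X/H\cong\mathbb{P}^1$, and then use the fact that any two points on $\mathbb{P}^1$ are linearly equivalent. Write $\pi_H$ for the canonical projection, a holomorphic map of degree $|H|$.

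The key identity is
\[
\pi_H^*[\pi_H(p)] = \sum_{x\in \pi_H^{-1}(\pi_H(p))} e_x\,[x] = \sum_{h\in H} h[p] = D_{H,p},
\]
where $e_x$ is the ramification index at $x$. To justify it, note that the fibre over $\pi_H(p)$ is the orbit $Hp$. The ramification index at each point of the orbit equals $|H_p|$, the order of the stabilizer, since stabilizers of points in one orbit are conjugate and the local degree of a quotient by a finite group is the stabilizer order. On the other hand, in $\sum_{h\in H} h[p]$ each point $hp$ of the orbit appears exactly $|H_p|$ times, because the map $h\mapsto hp$ is $|H_p|$-to-one onto the orbit. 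So the two divisors agree. This bookkeeping at branch points is the only real content, and it is where care is needed: for points with trivial stabilizer the identity is immediate, but at ramified points one must match multiplicities as above.

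To finish, let $p,q\in X$. On $\mathbb{P}^1$ we have $[\pi_H(p)]\sim[\pi_H(q)]$, since they are the divisor of a rational function $f$ of degree one (or $f=1$ if the points coincide). Pulling back gives $\mathrm{div}(f\circ\pi_H)=\pi_H^*\mathrm{div}(f)=D_{H,p}-D_{H,q}$, so $D_{H,p}\sim D_{H,q}$. Here we use that pullback of divisors by a nonconstant holomorphic map commutes with taking divisors of meromorphic functions, which is standard.
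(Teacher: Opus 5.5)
Your proposal is correct and follows the same route as the paper: you identify $D_{H,p}$ with the pullback divisor $\pi_H^*[\pi_H(p)]$, then use that any two points of $\mathbb{P}^1$ are linearly equivalent and that pullback preserves linear equivalence. Your matching of multiplicities via stabilizer orders on ramified orbits supplies a detail the paper leaves implicit when it writes $\pi^{-1}(\pi(p)) = D_{H,p}$.
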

\begin{proof}
	Let $p,q\in X$, and $\pi:X \rightarrow X/H$ be the natural projection. Because $X/H \cong \mathbb{P}^1$, we have that $\pi(p) \sim \pi(q)$ and therefore $\pi^{-1}(\pi(p)) \sim \pi^{-1}(\pi(q))$, which follows from \cite[Chapter V Lemma 2.3]{Mir95} and \cite[Chapter V Corollary 2.6]{Mir95}. Noting that $\pi^{-1}(\pi(p)) = D_{H,p}$ and $\pi^{-1}(\pi(q)) = D_{H,q}$, we can conclude that $D_{H,p} \sim D_{H,q}$.
\end{proof}
By the previous lemma, we can denote $D_H$ without specifying the point $p$. Now, we have the following lemma.
\begin{lemma}\label{First_lemma}
	If $H, N,L \leq G$ with $L \subseteq N \cap H$, $X/L \cong \mathbb{P}^1$, and $|H| - |N| = |L|$, then $D_H - D_N \sim D_L$.
\end{lemma}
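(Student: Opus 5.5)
The plan is to write both $D_H$ and $D_N$ as sums of copies of $D_L$ up to linear equivalence, using Lemma \ref{lemma_independence} for the group $L$. This is possible because $L$ is a subgroup of both $H$ and $N$ with $X/L\cong\mathbb{P}^1$. The hypothesis $|H|-|N|=|L|$ will then say exactly that the numbers of copies differ by one.

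\emph{Well-definedness.} Fix a point $p\in X$. Since $L\subseteq H$, the projection $X\to X/L\cong\mathbb{P}^1$ factors through $X/L\to X/H$. Hence $X/H$ is the image of $\mathbb{P}^1$ under a nonconstant map, and so $X/H\cong\mathbb{P}^1$. The same argument gives $X/N\cong\mathbb{P}^1$. Therefore $D_H$ and $D_N$ are well defined up to linear equivalence by Lemma \ref{lemma_independence}. Alternatively, we can simply compute with $D_{H,p}$ and $D_{N,p}$ for the same fixed $p$, which makes the statement meaningful in any case.

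\emph{Coset decomposition.} Decompose $H$ into right cosets of $L$, say $H=\bigsqcup_{i=1}^{m} L h_i$ with $m=[H:L]=|H|/|L|$. Then
\[
D_{H,p}=\sum_{h\in H}h[p]=\sum_{i=1}^{m}\sum_{l\in L} l\,[h_i(p)]=\sum_{i=1}^{m} D_{L,h_i(p)}.
\]
Right cosets are the correct choice here. With left cosets one would obtain orbits under conjugates $h_iLh_i^{-1}$ rather than under $L$ itself, and Lemma \ref{lemma_independence} would not apply directly. This bookkeeping point is the only subtle step in the argument.

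\emph{Applying Lemma \ref{lemma_independence}.} The lemma, applied to $L$, gives $D_{L,h_i(p)}\sim D_{L,p}$ for every $i$. Hence $D_{H,p}\sim \frac{|H|}{|L|}D_{L,p}$. The same computation with $N=\bigsqcup_{j} L n_j$ gives $D_{N,p}\sim\frac{|N|}{|L|}D_{L,p}$.

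\emph{Conclusion.} Subtracting the two equivalences gives
\[
D_H-D_N\sim\frac{|H|-|N|}{|L|}\,D_L = D_L,
\]
where the last equality is the hypothesis $|H|-|N|=|L|$. This proves the lemma.
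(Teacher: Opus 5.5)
Your proposal is correct and is essentially the paper's proof: decompose $H$ and $N$ into right cosets of $L$, identify each coset sum with an orbit divisor $D_{L,h(p)}$, apply Lemma \ref{lemma_independence} to $L$, and use $[H:L]-[N:L]=1$. Your added remark that $X/H$ and $X/N$ are also $\mathbb{P}^1$ is a correct detail the paper leaves implicit; it is what makes $D_H$ and $D_N$ well defined up to linear equivalence.
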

\begin{proof}
	Because $L \subseteq H$, we can consider the set of right cosets\footnote{Note that we are not assuming that $L$ is a normal subgroup of $H$ or $N$. Nonetheless, we can still consider the right cosets of a subgroup} of $L$ in $H$. So, by fixing  a set of representatives of the right cosets $H_L$, we have that
	\[
		D_{H,p} = \sum_{h\in H} h[p] = \sum_{h\in H_L} (Lh)[p] = \sum_{h\in H_L} L(h[p]) = \sum_{h\in H_L} D_{L,h[p]}.
	\]
	Doing the same process with $D_{N,p}$, and denoting by $N_L$ a set of representatives of right cosets of $L$ in $N$, we get that
	\begin{equation}\label{eqq1}
		D_{H,p} - D_{N,p} - D_{L,p} = \sum_{h\in H_L} D_{L,h[p]} - \sum_{n\in N_L} D_{L,n[p]} - D_{L,p},
	\end{equation}
	Because $X/L \cong \mathbb{P}^1$ and $D_{L,p}= \pi^{-1}(\pi(p))$, where $\pi:X\rightarrow X/L$ denotes the natural projection, as in the proof of Lemma \ref{lemma_independence}, we have that $D_{L,q} \sim D_{L,p}$ for all $p,q\in X$. Furthermore, because  $|H| - |N| = |L|$ we have that $|H_L| - |N_L| = 1$. Therefore, rearranging Equation \eqref{eqq1}, we can conclude that $D_{H,p}-D_{N,p} \sim D_{L,p}$.
\end{proof}

Because every compact Riemann surface $X$ of genus $2$ is hyperelliptic, we have that there exists a unique hyperelliptic involution $\sigma$ such that $X/\left<\sigma\right> \cong \mathbb{P}^1$, $\left<\sigma\right> \leq Z(G)$, and $D_{\left<\sigma\right>} \sim K$, where $K$ is a canonical divisor. So, when $X$ has genus $2$, we have the following corollary.

\begin{corollary}\label{corollary_cool}
	Let $X$ be a compact Riemann surface of genus 2 and $\sigma$ its hyperelliptic involution. If $H,N\leq G$ with $\left<\sigma\right> \subseteq N \cap H$, and $|H| - |N| = 2$, then $D_H - D_N \sim K$. Moreover, $\ell(D_H - D_N) = 2$.
\end{corollary}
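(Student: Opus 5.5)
The plan is to apply Lemma \ref{First_lemma} with $L = \left<\sigma\right>$, and then compute $\ell(K)$ directly.

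First I check the hypotheses of Lemma \ref{First_lemma} for the triple $H, N, L = \left<\sigma\right>$. By assumption $\left<\sigma\right> \subseteq N \cap H$. Since $\sigma$ is the hyperelliptic involution, $X/\left<\sigma\right> \cong \mathbb{P}^1$. Finally, $|H| - |N| = 2 = |\left<\sigma\right>|$. Lemma \ref{First_lemma} then gives
\[
D_H - D_N \sim D_{\left<\sigma\right>}.
\]
As recalled just before the corollary, $D_{\left<\sigma\right>} \sim K$: a fibre of the degree-$2$ map $X \to \mathbb{P}^1$ is a canonical divisor on a genus $2$ curve. This proves the first assertion.

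For the second assertion, note that $\ell$ depends only on the linear equivalence class. Hence
\[
\ell(D_H - D_N) = \ell(K) = \dim H^0(X, \Omega^1_X) = g = 2.
\]
Alternatively, one can use Riemann--Roch: $\ell(K) - \ell(0) = \deg K + 1 - g = 1$, so $\ell(K) = 2$.

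I expect no real obstacle. The only point needing care is the fact $D_{\left<\sigma\right>} \sim K$, which the paper takes as known. If it needed justification, I would note that $D_{\left<\sigma\right>}$ has degree $2 = 2g-2$ and moves in a pencil, so $\ell(D_{\left<\sigma\right>}) \geq 2$. Riemann--Roch then gives $\ell(K - D_{\left<\sigma\right>}) \geq 1$ for a divisor of degree $0$, which forces $K \sim D_{\left<\sigma\right>}$.
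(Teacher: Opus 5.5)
Your proposal is correct and is the paper's argument: apply Lemma \ref{First_lemma} with $L=\left<\sigma\right>$ to get $D_H - D_N \sim D_{\left<\sigma\right>} \sim K$, then use $\ell(K)=g=2$. The paper simply takes $D_{\left<\sigma\right>}\sim K$ as known, and your extra Riemann--Roch justification of that fact is also correct.
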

\begin{proof}
	We can use the previous lemma, with $L = \left<\sigma\right>$, and the fact that if $D_H - D_N \sim K$, then
	\[
		\ell(D_H - D_N) = \ell(K) = g = 2
	\]
	to deduce the corollary.
\end{proof}
Returning to the general case, we have the following lemma.
\begin{lemma}\label{secondlemma}
	Let $X$ be a compact Riemann surface of genus $g \geq 2$ with $G = \text{Aut}(X)$. If $H,N,L \leq G$ with $L \subseteq N \cap H$, $X/L \cong \mathbb{P}^1$, and $|H| = |N|$, then $D_H \sim D_N$.
\end{lemma}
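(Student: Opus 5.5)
The plan is to mimic the proof of Lemma \ref{First_lemma}, decomposing both $D_{H,p}$ and $D_{N,p}$ into fibres of the quotient map $\pi_L: X \rightarrow X/L$. Since $L \subseteq H$ and $L \subseteq N$, I fix sets of representatives $H_L$ and $N_L$ of the right cosets of $L$ in $H$ and in $N$. Regrouping the sums exactly as in Lemma \ref{First_lemma} gives
\[
D_{H,p} = \sum_{h\in H_L} D_{L,h[p]}, \qquad D_{N,p} = \sum_{n\in N_L} D_{L,n[p]}.
\]

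Next I use that $X/L \cong \mathbb{P}^1$. For every $q \in X$ we have $D_{L,q} = \pi_L^{-1}(\pi_L(q))$, so, as in the proof of Lemma \ref{lemma_independence}, $D_{L,q} \sim D_{L,p}$. Hence
\[
D_{H,p} \sim |H_L|\, D_{L,p} \quad \text{and} \quad D_{N,p} \sim |N_L|\, D_{L,p}.
\]

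By Lagrange's theorem, $|H_L| = |H|/|L|$ and $|N_L| = |N|/|L|$. The hypothesis $|H| = |N|$ then gives $|H_L| = |N_L|$, and therefore $D_{H,p} \sim D_{N,p}$.

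Finally, $D_H$ and $D_N$ are well defined up to linear equivalence by Lemma \ref{lemma_independence}. The statement does not assume $X/H$ or $X/N \cong \mathbb{P}^1$, so in that case $D_H$ and $D_N$ are read as $D_{H,p}$ and $D_{N,p}$ for an arbitrary $p$. The argument above gives $D_{H,p} \sim D_{N,p} \sim \frac{|H|}{|L|} D_{L,q}$ for every $p$ and every $q$. So this reading is itself independent of $p$ up to linear equivalence, and $D_H \sim D_N$ in either reading.

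There is no real obstacle here. The only point needing care is that $L$ need not be normal in $H$ or $N$, so the regrouping must use right cosets: each $Lh[p]$ is the $L$-orbit of $h[p]$, which is exactly $D_{L,h[p]}$, with multiplicities matching. Those multiplicities come from points with nontrivial stabilizer, and they agree because both sides count each $h \in H$ exactly once.
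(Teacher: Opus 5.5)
Your proof is correct and follows the paper's route exactly. The paper's proof only says to argue as in Lemma \ref{First_lemma}, which is what you do: split $D_{H,p}$ and $D_{N,p}$ into $L$-fibres over right cosets, use $D_{L,q}\sim D_{L,p}$ from $X/L\cong\mathbb{P}^1$, and compare the coset counts $|H|/|L| = |N|/|L|$. Your fourth paragraph is not needed: $X/H$ (and likewise $X/N$) is the image of $X/L\cong\mathbb{P}^1$ under a nonconstant map, so it is automatically $\mathbb{P}^1$.
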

\begin{proof}
	This can be proven in the same way as in Lemma \ref{First_lemma}.
\end{proof}
Now, if $X$ has genus $2$, then we have an equivalent characterization of a very ample divisor.
\begin{lemma}\label{very_ample_divisor}
	Let $X$ be of genus $2$ and $D$ a divisor on $X$. We have that $D$ is a very ample divisor if and only if $\deg(D) \geq 5$.
\end{lemma}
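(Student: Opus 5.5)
The plan is to use the standard criterion for very ampleness on a compact Riemann surface (Miranda, Chapter VII): $D$ is very ample if and only if $\ell(D-p-q)=\ell(D)-2$ for all $p,q\in X$, with $p=q$ allowed. Both directions then reduce to Riemann--Roch, $\ell(E)-\ell(K-E)=\deg E-1$, together with $g=2$ and $\deg K=2$.

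\emph{Sufficiency.} Suppose $\deg D\geq 5=2g+1$. Then $\deg D>2g-2$, so $\ell(K-D)=0$ and $\ell(D)=\deg D-1$. For any $p,q$ the divisor $D-p-q$ has degree $\deg D-2\geq 3>2g-2$, so likewise $\ell(D-p-q)=\deg D-3=\ell(D)-2$. The criterion then shows $D$ is very ample. This direction is routine.

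\emph{Necessity.} Suppose $\deg D\leq 4$; I would show $D$ is not very ample by splitting by degree.

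If $\deg D\leq 3$, I claim $\ell(D)\leq 2$, so $\phi_D$ maps to at most $\mathbb{P}^1$. Then $\phi_D$ cannot be an embedding, since $X$ has genus $2$ and is not isomorphic to $\mathbb{P}^1$ or a point. The bound comes from cases. For $\deg D=3$, Riemann--Roch gives $\ell(D)=2$. For $\deg D\leq 2$, Clifford's inequality, or directly $\ell(D)\leq \deg D+1$ when $\deg D\le 1$ and $\ell(D)\le 2$ in degree $2$, gives $\ell(D)\leq 2$. For $\deg D<0$ we have $\ell(D)=0$. The degenerate cases are cleaner handled by noting that very ampleness already forces $\ell(D)\geq 3$.

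The key case is $\deg D=4$. Here $\ell(D)=3$, so I need $p,q$ with $\ell(D-p-q)\neq 1$. Set $E=D-K$, which has degree $2$. Riemann--Roch gives $\ell(E)=1+\ell(K-E)\geq 1$, so $E\sim p+q$ for some effective divisor $p+q$. Then $D-p-q\sim K$, hence $\ell(D-p-q)=\ell(K)=2\neq \ell(D)-2=1$, and the criterion fails. Geometrically, $\phi_D$ would otherwise embed $X$ as a smooth plane quartic, which has genus $3$.

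I expect no real obstacle. The only delicate points are producing the effective representative of $D-K$ in the degree-$4$ case and making sure the low-degree cases are covered uniformly. For the latter I would simply observe that a very ample $D$ must have $\ell(D)\geq 3$, because a genus $2$ curve does not embed in $\mathbb{P}^1$.
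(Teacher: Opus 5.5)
Your proof is correct. The sufficiency direction is the same as the paper's: both use Riemann--Roch with $\deg D \geq 2g+1 = 5$ to verify the separation criterion (the paper simply cites Miranda, Chapter VII, Proposition 1.2).

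For necessity the routes differ in the key case. The paper argues globally that a genus-$2$ curve admits no embedding into $\mathbb{P}^n$ with $n<3$. It rules out $n=1$ by Hurwitz and $n=2$ by the genus--degree formula, since smooth plane curves have genus $(d-1)(d-2)/2 \neq 2$. The step $\deg D \leq 4 \Rightarrow \ell(D) \leq 3$, which connects this to the degree bound, is left implicit there; you supply it.

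Your treatment of $\deg D \leq 3$ is essentially the paper's $n=1$ case. For $\deg D = 4$ you stay inside Riemann--Roch instead: $D-K$ has degree $2$ and $\ell(D-K) \geq 1$, so $D-K \sim p+q$ with $p+q$ effective. Then $\ell(D-p-q) = \ell(K) = 2 \neq 1 = \ell(D)-2$, so the criterion fails.

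What your version buys is that it avoids the plane-curve genus formula and locates the failure. Since $|D|$ is base-point free in degree $4 = 2g$, $\phi_D$ identifies $p$ and $q$ for every $p+q \in |D-K|$, or fails to be immersive at $p$ when $p=q$. The paper's version is shorter and disposes of every $D$ with $\ell(D)=3$ at once, without computing anything about $|D|$.
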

\begin{proof}
	The ``if'' part of the lemma follows from the Riemann-Roch formula and the fact that $\deg(D) > \deg(K) + 2$ (see \cite[Chapter VII Proposition 1.2]{Mir95}). The ``only if'' part of the lemma is a consequence of the fact that there is no immersion from $X$ into $\mathbb{P}^{n}$ with $n < 3$; the case $n = 1$ follows from Hurwitz's Formula and the case $n = 2$ contradicts the Genus-Degree Formula.
\end{proof}
Finally, we have the following lemma.
\begin{lemma}\label{Proposition_order}
	If $X$ has genus $2$ and $D_1,D_2$ are two divisors on $X$ with $\deg(D_1) - \deg(D_2) > 2$, then $\ell(D_1 - D_2) = \deg(D_1) - \deg(D_2) - 1$.
\end{lemma}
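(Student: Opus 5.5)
Set $E \defeq D_1 - D_2$ and $d \defeq \deg(E) = \deg(D_1) - \deg(D_2)$. The claim depends only on this single divisor, so the plan is to apply Riemann--Roch to $E$ and show that the correction term vanishes. Since $g = 2$, Riemann--Roch gives
\[
	\ell(E) - \ell(K - E) = \deg(E) + 1 - g = d - 1 .
\]
It therefore suffices to prove that $\ell(K - E) = 0$.

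For the vanishing, note that $\deg(K) = 2g - 2 = 2$, so
\[
	\deg(K - E) = 2 - d < 0
\]
because $d > 2$. A divisor of negative degree has no nonzero global sections. Indeed, if $f$ were a nonzero meromorphic function with $\mathrm{div}(f) + K - E \geq 0$, taking degrees would give $0 + \deg(K - E) \geq 0$, since principal divisors have degree $0$. This contradicts $\deg(K - E) < 0$. Hence $\ell(K - E) = 0$, and so $\ell(D_1 - D_2) = d - 1$, as stated.

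No step here is a genuine obstacle. The only points to check are the degree count $\deg K = 2$ in genus $2$ and the standard fact that $\ell$ vanishes in negative degree. In fact the argument only needs $d > 2g - 2 = 2$, which is exactly the hypothesis. This is the same mechanism used in the ``if'' direction of Lemma~\ref{very_ample_divisor}.
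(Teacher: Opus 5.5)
Your proof is correct and is the paper's argument: Riemann--Roch applied to $D_1 - D_2$, with $\ell(K - (D_1 - D_2)) = 0$ because $\deg(D_1) - \deg(D_2) > \deg(K) = 2$. You spell out the negative-degree vanishing that the paper leaves implicit.
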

\begin{proof}
	This can be proven using the Riemann-Roch formula and the fact that $\deg(D_1) - \deg(D_2) > \deg(K)$.
\end{proof}
\section{Proof of main result}
We will use the notation of the previous section. As mentioned in the introduction, to study the structure of the $0$-dimensional component of $G_{X,D}$, we only need to consider the case where $D$ is a very ample divisor induced by a subgroup $H \leq \text{Aut}(X)$. Indeed, this follows from the following lemma.
\begin{lemma}\label{DisjointGalois}
	Let $X$ be a compact Riemann surface of genus $g \geq 2$ and $G = \text{Aut}(X)$. Then, $G_{X,D}$ has a $0$-dimensional component if and only if $D \sim D_H$ for some $H \leq G$ with $X/H \cong \mathbb{P}^1$.
\end{lemma}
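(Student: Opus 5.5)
We will use the decomposition of Equation \eqref{decomposition} from \cite[Theorem 1.1]{MR4283171}, together with the description of $D_{\pi_H}$ there. Recall that the component $\mathcal{X}_{D,\pi_H}$ is a projective space of dimension $\ell(D - D_{\pi_H}) - 1$. Here $D_{\pi_H}$ is the fibre divisor $\pi_H^{*}(x)$ of a point $x \in X/H \cong \mathbb{P}^1$, so $D_{\pi_H} = \pi_H^{-1}(\pi_H(p)) = D_{H,p}$ for some $p$. By Lemma \ref{lemma_independence}, this is well defined up to linear equivalence as $D_H$. Hence $G_{X,D}$ has a $0$-dimensional component if and only if there is some $H \leq G$ with $X/H \cong \mathbb{P}^1$ and $\ell(D - D_H) = 1$. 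The proof therefore reduces to showing that, in this situation, $\ell(D - D_H) = 1$ if and only if $D \sim D_H$.

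For the "if" direction, suppose $D \sim D_H$ with $X/H \cong \mathbb{P}^1$. Then $D - D_H \sim 0$, so $\ell(D - D_H) = \ell(0) = 1$, because the only global holomorphic functions on $X$ are constants. Thus $\mathcal{X}_{D,\pi_H}$ is a point, and $G_{X,D}$ has a $0$-dimensional component.

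For the "only if" direction, suppose some component $\mathcal{X}_{D,\pi_H}$ is a point, so $\ell(D - D_H) = 1$. We need $\deg(D - D_H) = 0$, since a degree-zero divisor with a nonzero section is principal, which gives $D \sim D_H$. This degree condition is the main obstacle, because $\ell(E) = 1$ alone does not force $\deg E = 0$. I would resolve it using the construction in \cite{MR4283171}. There the Galois subspaces in $\mathcal{X}_{D,\pi_H}$ correspond to effective divisors $E \in |D - D_H|$, and the Galois cover $f_W$ has degree $\deg D - \deg E$. On the other hand, $f_W$ is the quotient map $\pi_H$, so its degree is $|H| = \deg D_H$. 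It follows that every such $E$ has degree $\deg D - \deg D_H$, and this is the base locus contributed by $W \cap \phi_D(X)$.

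I would then use the fact, quoted from \cite[Remark 3.1]{MR4283171}, that the $0$-dimensional components are exactly the components made up of disjoint Galois subspaces. A disjoint $W$ satisfies $W \cap \phi_D(X) = \emptyset$, so the projection has no base points and $E = 0$. Therefore $\deg D = \deg D_H$, and the unique effective divisor in $|D - D_H|$ is $0$, which gives $D \sim D_H$.

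If I prefer not to rely on that remark, I would argue directly from the projection. Let $W$ be the unique point of $\mathcal{X}_{D,\pi_H}$. The linear system defining $f_W$ is a pencil of hyperplanes through $W$, cutting $D$ as $\pi_H^{*}(x) + E$ with $E$ fixed. The identification of $\mathcal{X}_{D,\pi_H}$ with $\mathbb{P}(H^0(D - D_H))$ then shows that $E$ is the unique element of $|D - D_H|$. The remaining check is that $E = 0$. This is exactly the content of Auffarth and Rahausen's remark, and it is the step I would verify carefully against their setup.
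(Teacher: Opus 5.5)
Your ``if'' direction is fine and matches the paper. The ``only if'' direction has a genuine gap, and it cannot be closed. You correctly observe that $\ell(D-D_H)=1$ does not force $\deg(D-D_H)=0$. Your fix, however, reads \cite[Remark 3.1]{MR4283171} as a biconditional. The remark only says that a disjoint Galois subspace lies in a $0$-dimensional component: a disjoint $W$ has empty base locus, so $E=0$ and $D\sim D_H$. The converse is false. In your own terms, $\mathcal{X}_{D,\pi_H}\cong|D-D_H|$, and $\ell(D-D_H)=1$ only says that $|D-D_H|$ consists of a single effective divisor $E$, which need not be $0$.

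Here is a concrete counterexample. Let $X$ have $\mathrm{Aut}(X)\cong C_{10}$ and take $H=C_5$, so $X/H\cong\mathbb{P}^1$. Put $D=D_H+p$ for a point $p$.
\begin{itemize}
\item $\deg D=6\ge 5$, so $D$ is very ample by Lemma \ref{very_ample_divisor}.
\item $\ell(D-D_H)=\ell(p)=1$ since $g>0$, so $\mathcal{X}_{D,\pi_H}$ is a point, i.e.\ a $0$-dimensional component. Its unique Galois subspace contains $\phi_D(p)$, because every hyperplane of the pencil has $p$ in its base divisor.
\item No subgroup of $C_{10}$ has order $6$, so $D\not\sim D_N$ for every $N$ with $X/N\cong\mathbb{P}^1$.
\end{itemize}
So your step ``the remaining check is that $E=0$'' fails, and the ``only if'' direction of the statement is false as written.

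The paper's own proof makes the same unjustified jump (``$\ell(D-D_H)=1$. Therefore, $D\sim D_H$''), so your diagnosis of the obstacle is sharper than the text's. What your argument does prove is the corrected statement: $G_{X,D}$ contains a \emph{disjoint} Galois subspace if and only if $D\sim D_H$ for some $H\le G$ with $X/H\cong\mathbb{P}^1$. Equivalently, some $\mathcal{X}_{D,\pi_H}$ is $0$-dimensional and $\deg D=|H|$. This is the form needed for the paper's discussion of disjoint Galois subspaces, including Theorem \ref{Main_theorem}(2).
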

\begin{proof}
	Assume that $G_{X,D}$ has a $0$-dimensional component, then by the decomposition of Equation \eqref{decomposition} we have that there exists a subgroup $H \leq G$ with $X/H \cong \mathbb{P}^1$ such that $\ell(D - D_H) = 1$. Therefore, $D \sim D_H$. Conversely, if there exists a subgroup $H \leq G$ with $X/H \cong \mathbb{P}^1$ such that $D \sim D_H$, then $\ell(D - D_H) = \ell(D_H - D_H) = 1$, so $G_{X,D_H}$ has a $0$-dimensional component.
\end{proof}
So, if we want to study the disjoint Galois subspaces we can assume that $D$ is induced by a subgroup $H \leq G$ with $X/H \cong \mathbb{P}^1$. Thus, we only need to calculate $\ell(D_H - D_N)$ where $N \leq \text{Aut}(X)$ and $X/N\cong \mathbb{P}^1$. Using Lemma \ref{very_ample_divisor}, we know that $D_H$ is very ample if and only if $\text{deg}(D_H) \geq 5$. Furthermore, we have that $\text{deg}(D_H) = |H|$. Thus, we only consider $D_H$ when $|H| \geq 5$.
\vsep
It is known that if $X$ is a compact Riemann surface of genus $2$, then $\text{Aut}(X)$ is isomorphic to one of the following groups (see \cite[Table 1]{zbMATH01251322}):
\begin{equation}\label{automorphism_groups}
	C_2, C_2 \times C_2, D_4, C_{10}, D_6, C_3\rtimes D_4, \mathrm{GL}_2(\mathbb{F}_3),
\end{equation}
where $D_{n}$ denotes the dihedral group of order $2n$ (we note that \cite{zbMATH01251322} denotes by $D_{2n}$ the dihedral group of order $2n$). Furthermore, we use \cite{GroupNames} to obtain information on the lattice of subgroups of $G$.
\vsep
We see that when $\text{Aut}(X) \cong C_2$ or $\text{Aut}(X) \cong C_2 \times C_2$ there are no subgroups of order at least $5$, so $G_{X,D_H}$ is empty.
\vsep
Now, for $\text{Aut}(X) \cong D_4$ or $\text{Aut}(X) \cong C_{10}$, we have that $D_H$ if and only if $|H| \geq 5$ (Lemma \ref{very_ample_divisor}). In particular, if $H \ne N\leq \text{Aut}(X)$ then $|H| - |N| > 2$. Thus, we can use Lemma \ref{Proposition_order} to calculate $\ell(D_H - D_N)$.
\vsep
The first non-trivial case is when $G \cong D_6$. We can use \cite{GroupNames} to obtain the lattice of subgroups of $G$:\footnote{The left subscript indicates the number of subgroups isomorphic to the group in the node.}
\begin{center}
	\begin{tikzpicture}[scale=1.0,sgplattice]
		\node[char] at (3.12,0) (1) {$C_1$};
		\node[char] at (4.12,1.09) (2) {$C_2$};
		\node at (0.125,1.09) (3) {$C_2$};
		\node at (6.12,1.09) (4) {$C_2$};
		\node[char] at (2.12,1.09) (5) {$C_3$};
		\node at (4.12,2.54) (6) {$C_2^2$};
		\node[norm] at (0.125,2.54) (7) {$S_3$};
		\node[norm] at (6.12,2.54) (8) {$S_3$};
		\node[char] at (2.12,2.54) (9) {$C_6$};
		\node[char] at (3.12,3.62) (10) {$D_6$};
		\draw[lin] (1)--(2) (1)--(3) (1)--(4) (1)--(5) (2)--(6) (3)--(6) (4)--(6)
		 (3)--(7) (5)--(7) (4)--(8) (5)--(8) (2)--(9) (5)--(9) (6)--(10)
		 (7)--(10) (8)--(10) (9)--(10);
		\node[cnj=3] {3};
		\node[cnj=4] {3};
		\node[cnj=6] {3};
	\end{tikzpicture}
\end{center}
If $H = D_6$ then $|H| - |N| > 2$ for any proper subgroup $N \leq D_6$, so we use Lemma \ref{Proposition_order} to calculate $\ell(D_H - D_N)$. Now, we see that the only subgroups of order at least $5$ are when $H \cong C_6$ or $H \cong S_3$. Again, we can use Lemma \ref{Proposition_order} to calculate $\ell(D_H - D_N)$ when $|N| < 4$. So, we will only focus on the cases when $|N| = 6$ or $|N| = 4$.
\vsep
Using the classification made by Broughton in \cite{MR1090743}, we can see that the only possible subgroup whose quotient is not isomorphic to $\mathbb{P}^1$ is of order $2$ (see \cite[Table 4]{MR1090743}). Therefore if $L \leq G$ with $|L| \ne 2$, then $X/L \cong \mathbb{P}^1$. Also, because the hyperelliptic involution is the only automorphism of order 2 whose quotient is $\mathbb{P}^1$, we have that if $L \leq G$ with $|L| = 2$ and $L \ne \left<\sigma \right>$ then $X/L \not \cong \mathbb{P}^1$. Lastly, recall that $\left<\sigma\right>$ is contained in the center of $G$.
\vsep
Therefore, if we want to calculate $\ell(D_H - D_N)$, when $H \cong S_3$ (the right one) and $N\cong C_6$, we can make the following observation: $H\cap N \cong C_3$ and $X/(H\cap N) \cong \mathbb{P}^1$, so we can use Lemma \ref{secondlemma}, and see that $D_H \sim D_N$.
\vsep
Another case is when $H \cong S_3$ (the left one) and $N \cong C_2 \times C_2$. In this case, we can make the following drawing
\begin{center}
	\begin{tikzpicture}[scale=1.0,sgplattice]
		\node[char] at (3.12,0) (1) {$C_1$};
		\node[char] at (4.12,1.09) (2) {$C_2$};
		\node at (0.125,1.09) (3) {$C_2$};
		\node at (6.12,1.09) (4) {$C_2$};
		\node[char] at (2.12,1.09) (5) {$C_3$};
		\node at (4.12,2.54) (6) {$C_2^2$};
		\node[norm] at (0.125,2.54) (7) {$S_3$};
		\node[norm] at (6.12,2.54) (8) {$S_3$};
		\node[char] at (2.12,2.54) (9) {$C_6$};
		\node[char] at (3.12,3.62) (10) {$D_6$};
		\draw[lin] (1)--(2) (1)--(3) (1)--(4) (1)--(5) (2)--(6) (3)--(6) (4)--(6)
		 (3)--(7) (4)--(8) (5)--(8) (2)--(9) (5)--(9) (6)--(10)
		 (7)--(10) (8)--(10) (9)--(10);
		\draw[lin, red] (7)--(5) (5)--(9) (9)--(2) (2)--(6);
		\node[cnj=3] {3};
		\node[cnj=4] {3};
		\node[cnj=6] {3}; 
	\end{tikzpicture}
\end{center}
As mentioned in the previous case, we have that $D_H \sim D_{C_6}$ and because $C_6 \cap N = Z(G)$, we can use Corollary \ref{corollary_cool} and see that $D_{C_6} - D_{N} \sim K$. Therefore, $D_H - D_N \sim K$. So, $\ell(D_H - D_N) = 2$.
\vsep
This ``Zig-Zag'' process can be done to calculate $\ell(D_H - D_N)$ for all necessary cases.
\vsep
Another example is when $G \cong \mathrm{GL}_2(\mathbb{F}_3)$. Again, we can use \cite{GroupNames} to obtain the lattice of $G$:
\begin{center}
	\begin{tikzpicture}[scale=1.0,sgplattice]
  \node[char] at (5.5,0) (1) {$C_1$};
  \node[char] at (8,0.803) (2) {$C_2$};
  \node at (3,0.803) (3) {$C_2$};
  \node at (1.75,1.89) (4) {$C_3$};
  \node at (9.25,1.89) (5) {$C_4$};
  \node at (5.5,1.89) (6) {$C_2^2$};
  \node at (0.125,3.44) (7) {$S_3$};
  \node at (4.38,3.44) (8) {$C_6$};
  \node at (2.25,3.44) (9) {$S_3$};
  \node[char] at (6.62,3.44) (10) {$Q_8$};
  \node at (8.75,3.44) (11) {$D_4$};
  \node at (10.9,3.44) (12) {$C_8$};
  \node at (1.75,4.99) (13) {$D_6$};
  \node at (9.25,4.99) (14) {${\rm SD}_{16}$};
  \node[char] at (5.5,4.99) (15) {${\rm SL}_2({\mathbb F}_3)$};
  \node[char] at (5.5,5.95) (16) {${\rm GL}_2({\mathbb F}_3)$};
  \draw[lin] (1)--(2) (1)--(3) (1)--(4) (2)--(5) (2)--(6) (3)--(6) (3)--(7)
	 (4)--(7) (2)--(8) (4)--(8) (3)--(9) (4)--(9) (5)--(10) (5)--(11)
	 (6)--(11) (5)--(12) (6)--(13) (7)--(13) (8)--(13) (9)--(13) (12)--(14)
	 (10)--(14) (11)--(14) (8)--(15) (10)--(15) (13)--(16) (14)--(16) (15)--(16);
  \node[cnj=3] {12};
  \node[cnj=4] {4};
  \node[cnj=5] {3};
  \node[cnj=6] {6};
  \node[cnj=7] {4};
  \node[cnj=8] {4};
  \node[cnj=9] {4};
  \node[cnj=11] {3};
  \node[cnj=12] {3};
  \node[cnj=13] {4};
  \node[cnj=14] {3};
\end{tikzpicture}
\end{center}

For example, if we want to calculate $\ell(D_H - D_N)$ when $H \cong C_8$ and $N \cong S_3$ (the leftmost one), then we can see that $C_8 \cap C_6 = Z(G)$, so $X/(C_8\cap C_6) \cong \mathbb{P}^1$ which lets us make the following diagram:
\begin{center}
	\begin{tikzpicture}[scale=1.0,sgplattice]
  \node[char] at (5.5,0) (1) {$C_1$};
  \node[char] at (8,0.803) (2) {$C_2$};
  \node at (3,0.803) (3) {$C_2$};
  \node at (1.75,1.89) (4) {$C_3$};
  \node at (9.25,1.89) (5) {$C_4$};
  \node at (5.5,1.89) (6) {$C_2^2$};
  \node at (0.125,3.44) (7) {$S_3$};
  \node at (4.38,3.44) (8) {$C_6$};
  \node at (2.25,3.44) (9) {$S_3$};
  \node[char] at (6.62,3.44) (10) {$Q_8$};
  \node at (8.75,3.44) (11) {$D_4$};
  \node at (10.9,3.44) (12) {$C_8$};
  \node at (1.75,4.99) (13) {$D_6$};
  \node at (9.25,4.99) (14) {${\rm SD}_{16}$};
  \node[char] at (5.5,4.99) (15) {${\rm SL}_2({\mathbb F}_3)$};
  \node[char] at (5.5,5.95) (16) {${\rm GL}_2({\mathbb F}_3)$};
  \draw[lin] (1)--(2) (1)--(3) (1)--(4) (2)--(5) (2)--(6) (3)--(6) (3)--(7)
	 (3)--(9) (4)--(9) (5)--(10) (5)--(11) (4)--(7)
	 (6)--(11) (5)--(12) (6)--(13) (7)--(13) (8)--(13) (9)--(13) (12)--(14)
	 (10)--(14) (11)--(14) (8)--(15) (10)--(15) (13)--(16) (14)--(16) (15)--(16);
  \draw[lin, red] (4)--(8) (8)--(2);
  \draw[lin, red] (2) to[out=10, in=-110] (12);
  \draw[lin, red] (7) to[out=-70,in=-200] (4);  
  \node[cnj=3] {12};
  \node[cnj=4] {4};
  \node[cnj=5] {3};
  \node[cnj=6] {6};
  \node[cnj=7] {4};
  \node[cnj=8] {4};
  \node[cnj=9] {4};
  \node[cnj=11] {3};
  \node[cnj=12] {3};
  \node[cnj=13] {4};
  \node[cnj=14] {3};
	\end{tikzpicture}
\end{center}

We can conclude that $D_H - D_N \sim K$ and that $\ell(D_H - D_N) = 2$. Again, this ``Zig-Zag'' process can be done to calculate $\ell(D_H - D_N)$ for all necessary cases.
\vsep
Doing this process with all possible $G$, we can prove Theorem \ref{Main_theorem}.
\section{Components of $G_{X,D_H}$}
Doing the process of Section $2$, with every possible automorphism group, we obtain the following table listing the dimensions of the components of $G_{X,D_H}$. We define
\[
	\mathcal{D}_n^H \coloneqq \# \{N\leq \text{Aut}(X); \hsep \ell(D_H - D_N) - 1 = n\}.
\]
\begin{center}
   \begin{table}[H]
	\begin{tabular}{c|c|c|c}
		$|H|$ & $\mathcal{D}_0^H$ & $\mathcal{D}_2^H$ & $\mathcal{D}_{4}^H$\\\hline
		$8$ & 1 & 3 & 1\\
	\end{tabular}
	\caption{Dimension components of $G_{X,D}$ with $\text{Aut}(X) \cong D_4$}
\end{table} 
\end{center}
\begin{center}
   \begin{table}[H]
	\begin{tabular}{c|c|c|c|c|c}
		$|H|$ & $\mathcal{D}_{-1}^H$ & $\mathcal{D}_{0}^H$ & $\mathcal{D}_{1}^H$& $\mathcal{D}_{2}^H$ & $\mathcal{D}_{6}^H$\\\hline
		$5$ & 1 & 1 & 1 & 0 & 0 \\
		$10$ & 0 & 1 & 0 & 1 & 1
	\end{tabular}
	\caption{Dimension components of $G_{X,D}$ with $\text{Aut}(X) \cong C_{10}$}
\end{table} 
\end{center}
\begin{center}
	\begin{table}[H]
	\begin{tabular}{c|c|c|c|c|c|c|c|c|c}
		$|H|$ & $\mathcal{D}_{-1}^H$ & $\mathcal{D}_{0}^H$&$\mathcal{D}_{1}^H$& $\mathcal{D}_{2}^H$ & $\mathcal{D}_{4}^H$ & $\mathcal{D}_{6}^H$ & $\mathcal{D}_{7}^H$ & $\mathcal{D}_{8}^H$\\\hline
		$6$ & 1 & 3 & 4 & 1 & 0 & 0 & 0 & 0 \\
		$12$ & 0 & 1 & 0 & 0 & 3 & 3 & 1 & 1 
	\end{tabular}
	\caption{Dimension components of $G_{X,D}$ with $\text{Aut}(X) \cong D_6$}
\end{table}
\end{center}
\begin{figure}[H]
\centering
\rotatebox{90}{%
  \begin{minipage}{\textheight}
    \centering

    \begin{minipage}{\linewidth}
      \centering
      \begin{tabular}{c|c|c|c|c|c|c|c|c|c|c|c|c|c|c|c|c}
        $|H|$ & $\mathcal{D}_{-1}^H$ & $\mathcal{D}_{0}^H$ & $\mathcal{D}_{1}^H$ & $\mathcal{D}_{2}^H$ & $\mathcal{D}_{3}^H$ & $\mathcal{D}_{4}^H$ & $\mathcal{D}_{5}^H$ & $\mathcal{D}_{6}^H$ & $\mathcal{D}_{7}^H$ & $\mathcal{D}_{8}^H$ & $\mathcal{D}_{10}^H$ & $\mathcal{D}_{14}^H$ & $\mathcal{D}_{16}^H$ & $\mathcal{D}_{18}^H$ & $\mathcal{D}_{19}^H$ & $\mathcal{D}_{20}^H$\\\hline
        $6$  & 7 & 5 & 8 & 1 & 0 & 0 & 0 & 0 & 0 & 0 & 0 & 0 & 0 & 0 & 0 & 0\\
        $8$  & 4 & 3 & 5 & 7 & 1 & 1 & 0 & 0 & 0 & 0 & 0 & 0 & 0 & 0 & 0 & 0\\
        $12$ & 1 & 3 & 0 & 3 & 0 & 0 & 5 & 7 & 1 & 1 & 0 & 0 & 0 & 0 & 0 & 0\\
        $24$ & 0 & 1 & 0 & 0 & 0 & 0 & 0 & 0 & 0 & 0 & 3 & 3 & 5 & 7 & 1 & 1\\
      \end{tabular}
      \captionof{table}{Dimension components of $G_{X,D}$ with $\text{Aut}(X) \cong C_3 \rtimes D_4$}
    \end{minipage}

    \vspace{1.5em}
	\hspace{-15em}
    \begin{minipage}{\linewidth}
	
      \centering
	  \scalebox{0.90}{
      \begin{tabular}{c|c|c|c|c|c|c|c|c|c|c|c|c|c|c|c|c|c|c|c|c|c|c|c|c|c|c|c}
        $|H|$ & $\mathcal{D}_{-1}^H$ & $\mathcal{D}_{0}^H$ & $\mathcal{D}_{1}^H$ & $\mathcal{D}_{2}^H$ & $\mathcal{D}_{3}^H$ & $\mathcal{D}_{4}^H$ & $\mathcal{D}_{5}^H$ & $\mathcal{D}_{6}^H$ & $\mathcal{D}_{7}^H$ & $\mathcal{D}_{8}^H$ & $\mathcal{D}_{10}^H$ & $\mathcal{D}_{11}^H$ & $\mathcal{D}_{12}^H$ & $\mathcal{D}_{14}^H$ & $\mathcal{D}_{16}^H$ & $\mathcal{D}_{18}^H$ & $\mathcal{D}_{19}^H$ & $\mathcal{D}_{20}^H$ & $\mathcal{D}_{22}^H$ & $\mathcal{D}_{30}^H$ & $\mathcal{D}_{34}^H$ & $\mathcal{D}_{38}^H$ & $\mathcal{D}_{40}^H$ & $\mathcal{D}_{42}^H$ & $\mathcal{D}_{43}^H$ & $\mathcal{D}_{44}^H$\\\hline
        $6$  & 16 & 12 & 13 & 1 & 0 & 0 & 0 & 0 & 0 & 0 & 0 & 0 & 0 & 0 & 0 & 0 & 0 & 0 & 0 & 0 & 0 & 0 & 0 & 0 & 0 & 0\\
        $8$  & 9 & 7 & 12 & 9 & 4 & 1 & 0 & 0 & 0 & 0 & 0 & 0 & 0 & 0 & 0 & 0 & 0 & 0 & 0 & 0 & 0 & 0 & 0 & 0 & 0 & 0\\
        $12$ & 5 & 4 & 0 & 7 & 0 & 12 & 0 & 9 & 4 & 1 & 0 & 0 & 0 & 0 & 0 & 0 & 0 & 0 & 0 & 0 & 0 & 0 & 0 & 0 & 0 & 0\\
		$16$ & 2 & 3 & 0 & 4 & 0 & 0 & 0 & 7 & 0 & 12 & 9 & 4 & 1 & 0 & 0 & 0 & 0 & 0 & 0 & 0 & 0 & 0 & 0 & 0 & 0 & 0\\
        $24$ & 1 & 1 & 0 & 0 & 0 & 0 & 0 & 3 & 0 & 0 & 4 & 0 & 0 & 7 & 12 & 9 & 4 & 1 & 0 & 0 & 0 & 0 & 0 & 0 & 0 & 0\\
		$48$ & 0 & 1 & 0 & 0 & 0 & 0 & 0 & 0 & 0 & 0 & 0 & 0 & 0 & 0 & 0 & 0 & 0 & 0 & 1 & 3 & 4 & 7 & 12 & 9 & 4 & 1\\
      \end{tabular}
	  }
      \captionof{table}{Dimension components of $G_{X,D}$ with $\text{Aut}(X) \cong \mathrm{GL}_2(\mathbb{F}_3)$}
    \end{minipage}

  \end{minipage}
} 
\end{figure}
\textit{Acknowledgments: The author would like to thank Robert Auffarth for his encouragement and help with this article. The author would also like to thank Antonio Lei for feedback on earlier drafts. The author was supported by Agencia Nacional de Investigaci\'on y Desarrollo-Subdirección de Capital Humano scholarship Mag\'ister Nacional 2022 N° 22221372, during part of the development of this article.}
\bibliographystyle{plain}
\bibliography{References}
\end{document}